\theoremstyle{plain}
\newtheorem{definition}{Definition}
\newtheorem{lemma}{Lemma}
\newtheorem{proposition}{Proposition}
\newtheorem{theorem}{Theorem}
\numberwithin{equation}{section}
\begin{document}
\title{Variational properties of $\sigma _{u}-$curvature for closed
submanifolds of arbitrary codimension in Riemannian manifolds}
\author{Mohammed Benalili}
\address{Dept. Maths, Facult\'{e} des Sciences, Universit\'{e} UABBT Tlemcen
Alg\'{e}rie.\\
m\_benalili@yahoo.fr}
\maketitle

\begin{abstract}
The objet of this paper is the study of variations of a functional whose
integrant is the $\sigma _{u}-$ curvature of closed submanifolds of
arbitrary codimension in Riemannian manifolds.
\end{abstract}

\section{Introduction}

The study of Riemannian geometry is based on the analysis of geometric
operators like the shape operator, the Ricci tensor, the Schouten operator
etc...Some functions built from these operators play a fundamental role in
the understanding of this discipline, in particular algebraic invariants
such the $r$-th symmetric functions $\sigma _{r}$ associated with the shape
operator and the Newton transformations $T_{r}$.The following articles can
be consulted on this subject (\cite{M.A1}, \cite{M.A2}, \cite{Case}, \cite%
{Cis}, \cite{K.N1}, \cite{K.N2}, \cite{Reilly}, \cite{Reilly2}, \cite{Rund}, 
\cite{An}). Reilly ( see \cite{Reilly} ) has considered the variations where
the integrand of the functional is a function of the $r$-th mean curvatures $%
\sigma _{r}$ of a hypersurfaces in a space form of constant curvature. In a
recent paper Case (see \cite{Case}) introduced and studied the notion of \ $r
$-th weighted curvature which generalizes the mean curvature. In this
context, we have extended some of Reilly's results still in the framework of
hypersurfaces of the space form (\cite{Ben}). For the study of variations of
submanifolds of a fixed Riemannian manifold in the sense of Reilly (\cite%
{Reilly}) and to my knowledge nothing was done in the more general case of
the higher codimension. In fairly recent and nice paper (\cite{An}), K.
Andrzejewski, W. Kozlowski and K. Niedzialomski introduced and developed an
algebraic framework which adapts very well to the case of higher codimension
submanifolds which gave me the idea of developing a work in the higher
codimension similar to that of Reilly (\cite{Reilly}). To be more precise,
let $A$ be an endomorphism of a $m$-dimensional real vector space $V$ and
denote by $End(V)$ the space of endomorphisms on $V.$ Let $N(q)$ stands for
the set of all $q$-uplets $u=\left( u_{1},...,u_{q}\right) $ where $u_{j}$
stand for positive integers. Denote by $End^{q}(V)=End(V)\times ...\times
End(V)$, $q$-times. For any $A=(A_{1},...,A_{q})\in End^{q}(V)$, $%
u=(u_{1},...,u_{q})\in N(q)$, $t=(t_{1},...,t_{q})\in R^{q}$, define:%
\begin{equation*}
tA=t_{1}A_{1}+...+t_{q}A_{q}
\end{equation*}%
and 
\begin{equation*}
t^{u}=(t^{u_{1}},...,t^{u_{q}})
\end{equation*}%
the Newton polynomial of $A$ is then given by%
\begin{eqnarray*}
P_{A}(t) &=&\det (tA+id_{V}) \\
&=&\dsum\limits_{\left\vert u\right\vert \leq p}\sigma _{u}(A)t^{u}\text{.}
\end{eqnarray*}%
Now if $\psi _{t}:M^{m}\rightarrow \overline{M}^{n}$ is a one parameter
family of immersions of an $m-$dimensional closed manifold $M^{m}$ into an $n
$-Riemannian manifold $\left( \overline{M}^{n}\text{,}\left\langle
,\right\rangle \right) $ denote $\left( \nu _{1},...,\nu _{q=n-m}\right) $ a
local normal basis to $M^{n}.$ If $A_{i}=A_{\nu _{i}}$ denotes the shape
operator with respect to the normal local vector field $\nu _{i}$, $i=1,...,q
$, put $A=(A_{1},...,A_{q})$. For any $x\in M$, $\ A(x)\in End^{q}(T_{x}M)$,
where $T_{x}M$ \ stands for the tangent space in $x$ to $M^{m}$. $\ $So $%
\sigma _{u}$ is $u$-th elementary symmetric function. The aim of this paper
is the study of the variations of the functional $\int_{M^{m}}\sigma _{u}dV$%
. We start by deriving the first formula for the first variation of our
functional, first in a general context which is formulated in Theorem\ref%
{theorem1} of the section tree, then in the particular case of space with
constant curvature given by Theorem\ref{theorem2} of the section four. Some
applications to submanifolds of the Euclidian space and the unit round
sphere are given.

\section{Generalized Newton Transformations}

Let $A$ be an endomorphism of a $m$-dimensional real vector space $V$
endowed with the usual inner product$.$ The Newton's transformations
associated with $A$ is a family $T=\left( T_{r}\right) _{r\in N}$ defined
recurrently by:

\begin{equation*}
T_{o}=id_{V}
\end{equation*}%
\begin{equation*}
T_{r}=\sigma _{r}id_{V}-AT_{r-1}.
\end{equation*}%
Denote by $End(V)$ the space of endomorphisms on $V$ . Let $A\in End(V)$ and 
$A^{\ast }$ its adjoint endomorphism$.$ $End(V)$ is then endowed with the
inner product $\left\langle A,B\right\rangle =tr(AB^{\ast })$ where $A$, $%
B\in End(V)$. Borrowing notations from the paper (\cite{An}), we denote by $%
N(q)$ the set of all $q$-uplets $u=\left( u_{1},...,u_{q}\right) $ where $%
u_{j}$ stand for positive integers. Let $End^{q}(V)=End(V)\times ...\times
End(V)$, $q$-times. For any $A=(A_{1},...,A_{q})\in End^{q}(V)$, $%
u=(u_{1},...,u_{q})\in N(q)$, $t=(t_{1},...,t_{q})\in R^{q}$. Define:%
\begin{equation*}
tA=t_{1}A_{1}+...+t_{q}A_{q}
\end{equation*}%
and 
\begin{equation*}
t^{u}=(t^{u_{1}},...,t^{u_{q}})
\end{equation*}%
the Newton polynomial of $A$ is then given by%
\begin{eqnarray*}
P_{A}(t) &=&\det (tA+id_{V}) \\
&=&\dsum\limits_{\left\vert u\right\vert \leq p}\sigma _{u}(A)t^{u}\text{.}
\end{eqnarray*}%
Consider the musical functions $\alpha ^{\#},\alpha _{b}:N(q)\rightarrow N(q)
$ given by%
\begin{equation*}
\alpha ^{\#}(u_{1},...,u_{q})=(u_{1},...,u_{\alpha -1},u_{\alpha
}+1,u_{\alpha +1},...,u_{q})
\end{equation*}%
and%
\begin{equation*}
\alpha _{b}(u_{1},...,u_{q})=(u_{1},...,u_{\alpha -1},u_{\alpha
}-1,u_{\alpha +1},...,u_{q}).
\end{equation*}%
The generalized Newton transformations (in abbreviated form GNT) are defined
( see \cite{An}) by: for any curve $t\rightarrow A(t)$ in End$^{d}$(V) such
that $A(0)=A$ the GNT of $A$ is a family of endomorphisms $\left(
T_{u}\right) _{u\in N(q)}$ given by 
\begin{equation}
\frac{d}{dt}\sigma _{u}(t)\mid _{t=0}=\sum_{\alpha }tr(\frac{d}{dt}A_{\alpha
}(t)\mid _{t=0}).T_{\alpha _{b}(u)}).  \tag{1}  \label{1}
\end{equation}%
Once again we use the notations of ( \cite{An}). For $i=(i^{1},...,i^{q})\in
N(s,q)$, its weight is defined as $\left\vert i\right\vert =\left(
\left\vert i^{1}\right\vert ,...,\left\vert i^{q}\right\vert \right) \in N(q)
$ and its length by $\left\Vert i\right\Vert =\sum_{\alpha =1}^{q}\left\vert
i^{\alpha }\right\vert =\sum_{\alpha =1,j=1}^{q,s}i_{j}^{\alpha }.$ Denote
by $I(q,s)$ the subset of $N(q,s)$ of matrices $i$ such that:

-each entry of $i$ is either $0$ or $1$

-the length of $i$ is $s$

-every column of $i$ contains only one entry equal to $1$.

Let $A$ $=(A_{1},...,A_{q})\in End^{q}(V)$ and $i\in I(q,s)$ with $I(q,0)$
is the set of vector $0,$ we put ( as in )%
\begin{equation*}
A^{i}=A_{1}^{i_{1}^{1}}A_{2}^{i_{1}^{2}}...A_{q}^{i_{1}^{q}}...A_{1}^{i_{s}^{1}}A_{2}^{i_{s}^{2}}...A_{q}^{i_{s}^{q}}
\end{equation*}

with 
\begin{equation*}
A^{0}=1_{V}\text{.}
\end{equation*}%
We quote after (\cite{An}):

\begin{proposition}
\label{prop1}The generalized Newton transformations $\left( T_{u}:u\in
N(q)\right) $ of $A=(A_{1},...,A_{q})$ enjoy the following fundamental
properties:

(1) For every $u\in N(q)$ with $\left\vert u\right\vert \geq m$, $T_{u}=0$

(2) Symmetric functions $\sigma _{u}$ are given by

\begin{equation*}
\left\vert u\right\vert \sigma _{u}=\dsum\limits_{\alpha }tr(A_{\alpha
}T_{\alpha _{b}(u)}).
\end{equation*}

(3) The trace of $T_{u}$ expresses as 
\begin{equation*}
tr(T_{u})=\left( m-\left\vert u\right\vert \right) \sigma _{u}.
\end{equation*}

(4) Symmetric functions $\sigma _{u}$ fulfill the following recurrence
relation%
\begin{equation*}
\sum_{\alpha ,\beta }tr(A_{\alpha }A_{\beta }T_{\beta _{b}\alpha
_{b}(u)})=-\left\vert u\right\vert \sigma _{u}+\sum_{\beta }tr\left(
A_{\beta }\right) \sigma _{\beta _{b}(u)}.
\end{equation*}

(5) 
\begin{equation*}
T_{u}=\sum_{s=0}^{\left\vert u\right\vert }\sum_{i\in I(q,s)}\left(
-1\right) ^{\left\Vert i\right\Vert }\sigma _{u-\left\vert i\right\vert
}A^{i}.
\end{equation*}

(6) 
\begin{eqnarray*}
T_{u} &=&\sigma _{u}1_{V}-\sum_{\alpha }A_{\alpha }T_{\alpha _{b}(u)} \\
&=&\sigma _{u}1_{V}-\sum_{\alpha }T_{\alpha _{b}(u)}A_{\alpha }
\end{eqnarray*}%
with $\left\vert u\right\vert \geq 1$.
\end{proposition}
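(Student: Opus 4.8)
\emph{Proof strategy.} The organizing principle I would use is that the whole family $(T_u)_{u\in N(q)}$ consists of the coefficients of one matrix-valued polynomial, the adjugate (classical adjoint) of $B(t):=tA+id_V=t_1A_1+\cdots+t_qA_q+id_V$. Write $\mathrm{adj}(B(t))=\sum_u S_u\,t^u$ and let me first prove $S_u=T_u$. Applying Jacobi's formula $\frac{d}{ds}\det M(s)=tr(\mathrm{adj}(M(s))\,\dot M(s))$ to $M(s)=B(t)$ along a curve $s\mapsto A(s)$ with $A(0)=A$, and using $\det B(t)=\sum_u\sigma_u(A(s))\,t^u$ together with $\dot M(s)=\sum_\alpha t_\alpha\dot A_\alpha$, the coefficient of $t^u$ on the two sides gives exactly $\frac{d}{ds}\sigma_u=\sum_\alpha tr(\dot A_\alpha S_{\alpha_b(u)})$; comparing with the defining relation \eqref{1} and invoking the non-degeneracy of the pairing $(X,Y)\mapsto tr(XY)$ on $End(V)$ forces $S_u=T_u$ for all $u$. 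This single identification is the step I expect to be the most delicate, precisely because it must also confirm that \eqref{1} determines each $T_v$ consistently, i.e. that the representative of $\partial\sigma_{\alpha^{\#}(v)}/\partial A_\alpha$ is the same for every admissible $\alpha$.

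Once $\mathrm{adj}(B(t))=\sum_u T_u t^u$ is available, four of the statements are short. Each entry of $\mathrm{adj}(B(t))$ is an $(m-1)\times(m-1)$ minor of $B(t)$, hence a polynomial in $t$ of degree at most $m-1$, so $T_u=0$ once $|u|\ge m$; this is the first assertion. The two-sided identity $B(t)\,\mathrm{adj}(B(t))=\mathrm{adj}(B(t))\,B(t)=\det B(t)\,id_V$, after substituting $\det B(t)=\sum_u\sigma_u t^u$ and reading off the coefficient of $t^u$ (using $t_\alpha\,t^{\alpha_b(u)}=t^u$, with the convention that $T_v=0$ when $v$ has a negative entry), becomes $T_u+\sum_\alpha A_\alpha T_{\alpha_b(u)}=\sigma_u id_V=T_u+\sum_\alpha T_{\alpha_b(u)}A_\alpha$, which is (6). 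For (5) I would expand $\mathrm{adj}(B(t))=\det B(t)\cdot B(t)^{-1}$ using the Neumann series $B(t)^{-1}=\sum_{s\ge0}(-1)^s(\sum_\alpha t_\alpha A_\alpha)^s$ (legitimate since $B(0)=id_V$); under the bijection of $I(q,s)$ with words of length $s$ in $\{1,\dots,q\}$ one has $(\sum_\alpha t_\alpha A_\alpha)^s=\sum_{i\in I(q,s)}t^{|i|}A^i$ with $\|i\|=s$, and multiplying by $\sum_v\sigma_v t^v$ and picking the coefficient of $t^u$ (so $v=u-|i|$) reproduces the stated closed form.

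The two trace identities I would obtain from homogeneity rather than from the adjugate. Since $P_A(\lambda t)=\sum_u\sigma_u(A)\lambda^{|u|}t^u$ shows $\sigma_u(\lambda A)=\lambda^{|u|}\sigma_u(A)$, each $\sigma_u$ is homogeneous of degree $|u|$ in $A$; evaluating \eqref{1} along the radial curve $A(s)=(1+s)A$ (for which $\dot A_\alpha(0)=A_\alpha$) yields $|u|\sigma_u=\sum_\alpha tr(A_\alpha T_{\alpha_b(u)})$, which is (2). Taking the trace in (6) and using $tr(id_V)=m$ together with (2) gives $tr(T_u)=m\sigma_u-|u|\sigma_u=(m-|u|)\sigma_u$, establishing (3). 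Finally, writing (6) with $u$ replaced by $\alpha_b(u)$, left-multiplying by $A_\alpha$, taking traces and summing over $\alpha$, the left-hand side collapses to $|u|\sigma_u$ by (2); rearranging leaves $\sum_{\alpha,\beta}tr(A_\alpha A_\beta T_{\beta_b\alpha_b(u)})=-|u|\sigma_u+\sum_\beta tr(A_\beta)\sigma_{\beta_b(u)}$, which is (4). Besides the identification of $T_u$ with the adjugate coefficients, the only other point that needs genuine care is the combinatorial bookkeeping in (5) — tracking $\|i\|$, $|i|$ and the non-commutative products $A^i$ — and this is handled cleanly by the word/matrix correspondence just described.
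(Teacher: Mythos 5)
Your argument is correct, but there is nothing in the paper to compare it against: Proposition \ref{prop1} is stated with the words ``We quote after (\cite{An})'' and no proof is given, so the burden of proof is discharged entirely by your own route. Your organizing idea --- identifying the family $(T_u)$ with the coefficients of $\mathrm{adj}(tA+id_V)$ via Jacobi's formula together with the non-degeneracy of the trace pairing, and then reading parts (1), (5) and (6) off the facts that the entries of the adjugate are minors of degree at most $m-1$, that $\mathrm{adj}(B)=\det B\cdot B^{-1}$ admits the Neumann expansion near $t=0$, and that $B\,\mathrm{adj}(B)=\mathrm{adj}(B)\,B=\det B\cdot id_V$ --- is sound, and it is essentially the generating-function viewpoint under which \cite{An} itself develops these identities; it has the added virtue of settling existence and well-definedness of the $T_u$ (the consistency issue you rightly flag), which the bare defining relation (\ref{1}) leaves open. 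Your derivations of (2) by Euler homogeneity along the radial curve $A(s)=(1+s)A$, of (3) by tracing (6), and of (4) by substituting $\alpha_b(u)$ into (6), left-multiplying by $A_\alpha$, tracing and summing, are all correct; the homogeneity device in (2) is in fact the same one the author uses later in the proof of Lemma \ref{lem1}, with the scalar weight $1+s$ replaced by the componentwise weights $1+\lambda_\beta\tau$. The only points that genuinely require care --- the convention $T_v=0$ when $v$ has a negative entry while extracting the coefficient of $t^u$ in (6), and the truncation $\sigma_{u-|i|}=0$ for $|i|\not\le u$ which cuts the Neumann series down to $s\le|u|$ in (5) --- you have addressed explicitly, so I see no gap.
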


\section{Variational properties}

\ \ Consider a family of one parameter $\ \psi _{t}:M^{m}\rightarrow 
\overline{M}^{n}$ of immersions of an $m-$dimensional closed manifold $M^{m}$
into an $n$-Riemannian manifold $\left( \overline{M}^{n}\text{,}\left\langle
,\right\rangle \right) $. We consider on $M^{m}$ the Riemannian metric
induced by the metric on $\overline{M}^{n}$. If $\overline{\nabla }$ stands
for the covariant derivative in $\overline{M}^{n}$, for every vector fields $%
X,Y$ tangent along $M^{m}$ in a neighborhood of a point $x$, the Gauss
formula writes as:%
\begin{equation*}
\overline{\nabla }_{X}Y=\nabla _{X}Y+\alpha (X,Y)
\end{equation*}%
where

$\nabla $ is the induced covariant derivative $M^{m}$ defined by $\nabla
_{X}Y=\left( \overline{\nabla }_{X}Y\right) ^{\top }$

$\alpha $ is the second fundamental form given by $\alpha \left( X,Y\right)
=\left( \overline{\nabla }_{X}Y\right) ^{\bot }.$

Similarly if $\nu $ is a normal vector field along $M^{m}$ in a neighborhood
of $x,$we obtain the Weirgarten equation:%
\begin{equation*}
\overline{\nabla }_{X}\nu =-A_{\nu }\left( X\right) +D_{X}\nu
\end{equation*}%
where

$D$ denotes the covariant derivative on the normal bundle of $M,$ defined by
: $D_{X}\nu =\left( \overline{\nabla }_{X}\nu \right) ^{\bot }$

$A_{\nu }$ is the shape operator $A_{\nu }(X)=-\left( \overline{\nabla }%
_{X}\nu \right) ^{\top }$ which is related to the second fundamental form by:%
\begin{equation*}
\left\langle A_{\nu }(X),Y\right\rangle =\left\langle \alpha \left(
X,Y\right) ,\nu \right\rangle 
\end{equation*}%
for any vector fields $X$, $Y$ on $M.$ So if $\left( \nu _{1},...,\nu
_{q=n-m}\right) $ is a normal local basis to $M^{m}$ and $A_{i}=A_{\nu _{i}}$%
, $i=1,...,q$, stands for the shape operator with respect to $\nu _{i}$ we
let $A=\left( A_{1},...,A_{q}\right) $ and denote by $\sigma _{u}$ the $u$%
-th elementary symmetric function.

Consider the following variational problem%
\begin{equation*}
\delta \left( \int_{M}\sigma _{u}dV\right) =0
\end{equation*}%
with $u\in I(q,s)$.

\begin{theorem}
\label{theorem1} With the above notations and assumptions the first
variation of the global $\sigma _{u}$-curvature is given by:%
\begin{eqnarray*}
&&\frac{d}{dt}\left( \int_{M^{m}}\sigma _{u}dV\right) \\
&=&\int_{M^{m}}\left\{ -g^{jk}\sum_{\alpha }R^{\overline{M}}(\nu ^{\alpha },%
\frac{\partial \psi }{\partial x_{k}},\frac{\partial \psi }{\partial x_{i}}%
,X)\left( T_{\alpha _{b}(u)}\right) _{j}^{i}+\left( T_{\alpha
_{b}(u)}\right) ^{ij}\lambda _{\alpha },_{ij}\right. \\
&&+\left. g^{jk}\left( R^{\overline{M}}\right) ^{\bot }(\frac{\partial \psi 
}{\partial x_{i}},X^{\bot })\frac{\partial \psi }{\partial x_{k}}\left(
T_{\alpha _{b}(u)}\right) \right. \\
&&+g^{jk}\left\langle \frac{\partial \nu ^{\alpha }}{\partial t},\nu ^{\beta
}\right\rangle \left( A_{\beta }\right) _{ik}\left( T_{\alpha
_{b}(u)}\right) _{j}^{i}-g^{jk}\lambda _{\beta ,k}\left\langle D_{\frac{%
\partial }{\partial x_{i}}}\nu ^{\alpha },\nu ^{\beta }\right\rangle \left(
T_{\alpha _{b}(u)}\right) _{j}^{i} \\
&&-\lambda _{\beta }g^{jk}\left\langle D_{\frac{\partial }{\partial x_{i}}%
}\nu ^{\alpha },\nu ^{\gamma }\right\rangle \left\langle D_{\frac{\partial }{%
\partial x_{k}}}\nu ^{\beta },\nu ^{\gamma }\right\rangle \left( T_{\alpha
_{b}(u)}\right) _{j}^{i} \\
&&\left. -\mu ^{m}g^{jk}\left\langle D_{\frac{\partial }{\partial x_{i}}}\nu
^{\alpha },\overline{\nabla }_{\frac{\partial }{\partial x_{k}}}\frac{%
\partial \psi }{\partial x_{m}}\right\rangle \left( T_{\alpha
_{b}(u)}\right) _{j}^{i}-\left\langle \lambda ,\beta ^{\#}\left( u\right)
\right\rangle \sigma _{\beta ^{\#}(u)}\right\} dV.
\end{eqnarray*}
\end{theorem}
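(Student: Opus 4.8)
The plan is to compute the variation by differentiating the integrand $\sigma_u \, dV$ with respect to the deformation parameter $t$ and integrating by parts to collect the geometric quantities into the stated form. The two factors must be varied separately: the volume form $dV$ and the symmetric function $\sigma_u$. For the volume form, I would use the standard first-variation formula $\frac{d}{dt}dV = (\operatorname{div} X^{\top} - \langle X^{\bot}, \mathbf{H}\rangle)\,dV$ where $X = \frac{\partial \psi}{\partial t}$ is the variation field and $\mathbf{H}$ is the mean curvature vector; the tangential part eventually integrates away on the closed manifold $M^m$. The genuinely new work is varying $\sigma_u$, and here I would invoke Proposition \ref{prop1}(2), which expresses $\sigma_u$ through the shape operators and the GNT, together with the defining relation (\ref{1}) for the generalized Newton transformations. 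This reduces $\frac{d}{dt}\sigma_u$ to a sum over $\alpha$ of traces $\operatorname{tr}\!\left(\frac{d}{dt}A_\alpha \cdot T_{\alpha_b(u)}\right)$.

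The core step is therefore to compute $\frac{d}{dt}A_\alpha$, the variation of each shape operator $A_{\nu^\alpha}$. I would differentiate the Weingarten relation $\langle A_\nu(X),Y\rangle = \langle \alpha(X,Y),\nu\rangle = \langle \overline{\nabla}_X Y,\nu\rangle$, being careful that the normal frame $\nu^\alpha$ itself moves with $t$, which produces the terms involving $\langle \frac{\partial \nu^\alpha}{\partial t},\nu^\beta\rangle$ and $\langle D_{\partial/\partial x_i}\nu^\alpha,\nu^\beta\rangle$ in the statement. The commutation of $\frac{\partial}{\partial t}$ with the ambient connection $\overline{\nabla}$ introduces the curvature tensor $R^{\overline{M}}$ of $\overline{M}$ through the second-derivative commutator $\overline{\nabla}_t\overline{\nabla}_{\partial/\partial x_k} - \overline{\nabla}_{\partial/\partial x_k}\overline{\nabla}_t = R^{\overline{M}}(X,\frac{\partial\psi}{\partial x_k})$ applied to the immersion, which accounts for the first and third lines of the stated formula (the tangential and normal components $R^{\overline{M}}$ and $(R^{\overline{M}})^{\bot}$). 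The coordinate expressions $\lambda_\alpha = \langle X,\nu^\alpha\rangle$ and $\mu^m = \langle X, \frac{\partial \psi}{\partial x_m}\rangle$ are just the normal and tangential components of the variation field, and the covariant Hessian term $(T_{\alpha_b(u)})^{ij}\lambda_{\alpha,ij}$ emerges after one integration by parts moves derivatives off $A_\alpha$ onto the frame-dependent coefficients, using that $T_{\alpha_b(u)}$ is divergence-free in the appropriate sense (an identity one expects from the recursion in Proposition \ref{prop1}(6), analogous to the classical Newton-transformation divergence vanishing in space forms).

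The final term $-\langle \lambda, \beta^{\#}(u)\rangle \sigma_{\beta^{\#}(u)}$ comes from tracking how the multi-index $u$ interacts with the moving frame: the musical operation $\beta^{\#}$ raises the index in the $\beta$-slot, reflecting that differentiating in a rotating normal frame shifts the symmetric function to a neighboring multi-index. I would assemble this by carefully expanding $\frac{d}{dt}\langle A_\alpha(\partial/\partial x_i),\partial/\partial x_k\rangle$ in a local coordinate chart with $\{\partial \psi/\partial x_i\}$ spanning the tangent space and $\{\nu^\alpha\}$ the normal space, then contracting against $(T_{\alpha_b(u)})_j^i$ and $g^{jk}$. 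The main obstacle will be the bookkeeping: in higher codimension the normal connection $D$ is genuinely nontrivial (unlike the hypersurface case where the normal bundle is a line bundle), so the derivatives $D_{\partial/\partial x_i}\nu^\alpha$ and their pairings $\langle D\nu^\alpha,\nu^\beta\rangle$, $\langle D\nu^\alpha,\nu^\gamma\rangle\langle D\nu^\beta,\nu^\gamma\rangle$ do not vanish and must be carried through consistently, and keeping the multi-index operations $\alpha_b$, $\beta^{\#}$ aligned with the right shape operators and GNT throughout the integration by parts is where errors are most likely to enter. Collecting all surviving boundary-free terms after integration over the closed $M^m$ then yields the displayed formula.
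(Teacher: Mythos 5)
Your overall strategy coincides with the paper's: differentiate $\sigma_u$ via the defining relation (\ref{1}) of the GNT to reduce everything to $\sum_\alpha\mathrm{tr}\bigl(\tfrac{\partial A_\alpha}{\partial t}T_{\alpha_b(u)}\bigr)$, compute $\tfrac{\partial A_\alpha}{\partial t}$ by differentiating the Weingarten relation while tracking the moving normal frame, pick up $R^{\overline M}$ from the commutator of $\overline\nabla_{\partial_t}$ with $\overline\nabla_{\partial_{x_i}}$, and combine with the first variation of $dV$. However, your plan has a genuine gap at exactly the point where the distinctive term $-\langle\lambda,\beta^{\#}(u)\rangle\sigma_{\beta^{\#}(u)}$ must be produced. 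That term does not come from ``the rotating normal frame shifting the multi-index,'' as you suggest; it comes from the quadratic piece $\lambda_\beta\,(A_\alpha A_\beta)_{ik}$ that appears in $\langle\overline\nabla_{\partial_{x_i}}\nu^\alpha,\overline\nabla_{\partial_{x_k}}X\rangle$ when $X^\perp=\lambda_\beta\nu^\beta$ is differentiated. To convert $\sum_{\alpha,\beta}\lambda_\beta\,\mathrm{tr}(A_\alpha A_\beta T_{\alpha_b(u)})$ into $-\langle\lambda,\beta^{\#}(u)\rangle\sigma_{\beta^{\#}(u)}+\sum_\beta\lambda_\beta\,\mathrm{tr}(A_\beta)\,\sigma_u$ one needs a weighted version of the recurrence in Proposition \ref{prop1}(4) (the paper isolates this as Lemma \ref{lem1}, proved by deforming $A$ along the curve $A(\tau)=(1_V+\lambda\tau)A$). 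Without this identity you cannot close the computation, and you also lose the cancellation of $\sum_\beta\lambda_\beta\,\mathrm{tr}(A_\beta)\,\sigma_u$ against the mean-curvature part $-\lambda_\alpha\mathrm{tr}(A_\alpha)\sigma_u$ of $\tfrac{\partial\, dV}{\partial t}$, which is what removes all zeroth-order terms except the $\beta^{\#}(u)$ one.

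A second gap concerns the tangential component $\mu^l$ of the variation field and the term $(R^{\overline M})^\perp$ in the third line of the formula. You attribute both curvature contributions to the $t$-commutator, but the normal-curvature term actually arises from the Codazzi equation, which is needed to rewrite $g^{jk}\mu^l(A_\alpha)_{kl,i}(T_{\alpha_b(u)})_j^i$ as $\mu^l\sigma_{u,l}$ plus the $(R^{\overline M})^\perp$ term; only then does $\mu^l\sigma_{u,l}+\sigma_u\mu^l_{,l}=(\sigma_u\mu^l)_{,l}$ become a divergence that vanishes upon integration over the closed $M^m$. Your assertion that the tangential part ``eventually integrates away'' silently assumes this Codazzi step. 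Finally, your claim that the Hessian term $(T_{\alpha_b(u)})^{ij}\lambda_{\alpha,ij}$ is obtained by integration by parts using a divergence-free property of $T_{\alpha_b(u)}$ is off the mark: in a general ambient manifold the generalized Newton transformations are not divergence-free, and in the paper this term appears directly from the second derivative $\lambda_{\alpha,ki}$ in $\tfrac{\partial(A_\alpha)_{ik}}{\partial t}$ with no integration by parts at all.
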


By definition of $\sigma _{u}$, we have%
\begin{equation*}
\frac{\partial \sigma _{u}}{\partial t}=\sum_{\alpha }\text{tr}\left( \frac{%
\partial A_{\alpha }}{\partial t}T_{\alpha _{b}(u)}\right)
\end{equation*}%
with%
\begin{equation*}
\text{tr}\left( \frac{\partial A_{\alpha }}{\partial t}T_{\alpha _{b}\left(
u\right) }\right) =\frac{\partial \left( A_{\alpha }\right) _{j}^{i}}{%
\partial t}\left( T_{\alpha _{b}(u)}\right) _{i}^{j}
\end{equation*}%
and%
\begin{equation*}
\left( A_{\alpha }\right) _{i}^{j}=g^{jk}\left( A_{\alpha }\right) _{ik}
\end{equation*}%
where 
\begin{equation*}
\left( A_{\alpha }\right) _{ik}=\left\langle \overline{\nabla }_{\frac{%
\partial }{\partial x_{i}}}\frac{\partial \psi }{\partial x_{k}}),\nu
_{\alpha }\right\rangle =-\left\langle \overline{\nabla }_{\frac{\partial
\psi }{\partial x_{i}}}\nu _{\alpha },\frac{\partial \psi }{\partial x_{k}}%
\right\rangle
\end{equation*}%
where $\left( \nu ^{1},...,\nu ^{k}\right) $ is an orthogonal basis to $%
M^{m} $ and $k=n-m$.

Hence 
\begin{equation*}
\frac{\partial \left( A_{\alpha }\right) _{i}^{j}}{\partial t}=\frac{%
\partial g^{jk}}{\partial t}\left( A_{\alpha }\right) _{ik}+g^{jk}\frac{%
\partial \left( A_{\alpha }\right) _{ik}}{\partial t}\text{.}
\end{equation*}%
Obviously%
\begin{equation*}
\frac{\partial g^{jk}}{\partial t}=-g^{jl}\frac{\partial g_{pl}}{\partial t}%
g^{pk}
\end{equation*}%
Now, if we consider the calculations in a normal coordinates that is at a
point $x\in M$ \ where the metric tensor fulfills $g_{ij}(x)=\left\langle 
\frac{\partial \psi }{\partial x_{i}},\frac{\partial \psi }{\partial x_{j}}%
\right\rangle =\delta _{ij}$ and $\Gamma _{ij}^{k}(x)=0$, where $\Gamma
_{ij}^{k}$ stand for the Christoffel symbols corresponding to the metric
connection $\nabla $ on $M$, we get%
\begin{eqnarray*}
\frac{\partial g_{pl}}{\partial t} &=&\left\langle \overline{\nabla }_{\frac{%
\partial }{\partial t}}\frac{\partial \psi }{\partial x_{p}},\frac{\partial
\psi }{\partial x_{l}}\right\rangle +\left\langle \frac{\partial \psi }{%
\partial x_{p}},\overline{\nabla }_{\frac{\partial }{\partial t}}\frac{%
\partial \psi }{\partial x_{l}}\right\rangle \\
&=&\left\langle \overline{\nabla }_{\frac{\partial }{\partial x_{p}}}X,\frac{%
\partial \psi }{\partial x_{l}}\right\rangle +\left\langle \frac{\partial
\psi }{\partial x_{p}},\overline{\nabla }_{\frac{\partial }{\partial x_{l}}%
}X\right\rangle \\
&=&\left\langle \overline{\nabla }_{\frac{\partial }{\partial x_{p}}}\left(
\lambda _{\beta }\nu ^{\beta }+\mu ^{m}\frac{\partial \psi }{\partial x_{m}}%
\right) ,\frac{\partial \psi }{\partial x_{l}}\right\rangle \\
&&+\left\langle \frac{\partial \psi }{\partial x_{p}},\overline{\nabla }_{%
\frac{\partial }{\partial x_{l}}}\left( \lambda _{\beta }\nu ^{\beta }+\mu
^{m}\frac{\partial \psi }{\partial x_{m}}\right) \right\rangle \\
&=&\mu _{p,l}+\mu _{l,p}-2\lambda _{\beta }\left( A_{\beta }\right) _{pl}.
\end{eqnarray*}%
hence%
\begin{equation}
\frac{\partial g^{jk}}{\partial t}=-g^{jl}g^{pk}\left( \mu _{p,l}+\mu
_{l,p}-2\lambda _{\beta }\left( A_{\beta }\right) _{pl}\right) .  \tag{10}
\label{10}
\end{equation}

We have also%
\begin{eqnarray}
\frac{\partial \nu ^{\alpha }}{\partial t} &=&\left\langle \frac{\partial
\nu ^{\alpha }}{\partial t},\frac{\partial \psi }{\partial x_{k}}%
\right\rangle \frac{\partial \psi }{\partial x_{k}}+\left\langle \frac{%
\partial \nu ^{\alpha }}{\partial t},\nu ^{\beta }\right\rangle \nu ^{\beta }
\TCItag{11}  \label{11} \\
&=&-\left\langle \nu ^{\alpha },\overline{\nabla }_{\frac{\partial }{%
\partial t}}\frac{\partial \psi }{\partial x_{k}}\right\rangle \frac{%
\partial \psi }{\partial x_{k}}+\left\langle \frac{\partial \nu ^{\alpha }}{%
\partial t},\nu ^{\beta }\right\rangle \nu ^{\beta }  \notag \\
&=&-\left\langle \nu ^{\alpha },\overline{\nabla }_{\frac{\partial }{%
\partial x_{k}}}X\right\rangle \frac{\partial \psi }{\partial x_{k}}%
+\left\langle \frac{\partial \nu ^{\alpha }}{\partial t},\nu ^{\beta
}\right\rangle \nu ^{\beta }  \notag \\
&=&-g^{jk}\left( \lambda _{\alpha },_{j}+\mu ^{l}\left( A_{\alpha }\right)
_{jl}\right) \frac{\partial \psi }{\partial x_{k}}+\left\langle \frac{%
\partial \nu ^{\alpha }}{\partial t},\nu ^{\beta }\right\rangle \nu ^{\beta
}.  \notag
\end{eqnarray}%
Now we compute%
\begin{eqnarray*}
\frac{\partial \left( A_{\alpha }\right) _{ik}}{\partial t} &=&-\left\langle 
\overline{\nabla }_{\frac{\partial }{\partial t}}\overline{\nabla }_{\frac{%
\partial }{\partial x_{i}}}\nu ^{\alpha },\frac{\partial \psi }{\partial
x_{k}}\right\rangle -\left\langle \overline{\nabla }_{\frac{\partial }{%
\partial x_{i}}}\nu ^{\alpha },\overline{\nabla }_{\frac{\partial }{\partial
t}}\frac{\partial \psi }{\partial x_{k}}\right\rangle \\
&=&-R^{\overline{M}}(\nu ^{\alpha },\frac{\partial \psi }{\partial x_{k}},%
\frac{\partial \psi }{\partial x_{i}},X)-\left\langle \overline{\nabla }_{%
\frac{\partial }{\partial x_{i}}}\overline{\nabla }_{\frac{\partial }{%
\partial x_{t}}}\nu ^{\alpha },\frac{\partial \psi }{\partial x_{k}}%
\right\rangle \\
&&-\left\langle \overline{\nabla }_{\frac{\partial }{\partial x_{i}}}\nu
^{\alpha },\overline{\nabla }_{\frac{\partial }{\partial x_{k}}%
}X\right\rangle .
\end{eqnarray*}%
By formula (\ref{11}), we get%
\begin{eqnarray*}
\overline{\nabla }_{\frac{\partial }{\partial x_{i}}}\overline{\nabla }_{%
\frac{\partial }{\partial x_{t}}}\nu ^{\alpha } &=&-g^{jm}\left( \left(
\lambda _{\alpha },_{ji}+\mu ^{l,i}\left( A_{\alpha }\right) _{jl}+\mu
^{l}\left( A_{\alpha }\right) _{jl_{,i}}\right) \frac{\partial \psi }{%
\partial x_{m}}\right) \\
&&-g^{jk}\left( \lambda _{\alpha },_{j}+\mu ^{l}\left( A_{\alpha }\right)
_{jl}\right) \overline{\nabla }_{\frac{\partial }{\partial x_{i}}}\frac{%
\partial \psi }{\partial x_{k}} \\
&&+\frac{\partial }{\partial x_{i}}\left\langle \frac{\partial \nu ^{\alpha }%
}{\partial t},\nu ^{\beta }\right\rangle \nu ^{\beta }+\left\langle \frac{%
\partial \nu ^{\alpha }}{\partial t},\nu ^{\beta }\right\rangle \overline{%
\nabla }_{\frac{\partial }{\partial x_{i}}}\nu ^{\beta }
\end{eqnarray*}%
so%
\begin{eqnarray*}
\left\langle \overline{\nabla }_{\frac{\partial }{\partial x_{i}}}\overline{%
\nabla }_{\frac{\partial }{\partial x_{t}}}\nu ^{\alpha },\frac{\partial
\psi }{\partial x_{k}}\right\rangle &=&-g^{jm}\left( \lambda _{\alpha
},_{ji}+\mu ^{l,i}\left( A_{\alpha }\right) _{jl}+\mu ^{l}\left( A_{\alpha
}\right) _{jl_{,i}}\right) g_{mk} \\
&&+\left\langle \frac{\partial \nu ^{\alpha }}{\partial t},\nu ^{\beta
}\right\rangle \left\langle \overline{\nabla }_{\frac{\partial }{\partial
x_{i}}}\nu ^{\beta },\frac{\partial \psi }{\partial x_{k}}\right\rangle \\
&=&-\left( \lambda _{\alpha },_{ki}+\mu ^{l,i}\left( A_{\alpha }\right)
_{kl}+\mu ^{l}\left( A_{\alpha }\right) _{kl_{,i}}\right) -\left\langle 
\frac{\partial \nu ^{\alpha }}{\partial t},\nu ^{\beta }\right\rangle \left(
A_{\beta }\right) _{ik}.
\end{eqnarray*}%
In the same manner, we have 
\begin{eqnarray*}
\left\langle \overline{\nabla }_{\frac{\partial }{\partial x_{i}}}\nu
^{\alpha },\overline{\nabla }_{\frac{\partial }{\partial x_{k}}%
}X\right\rangle &=&\left\langle \overline{\nabla }_{\frac{\partial }{%
\partial x_{i}}}\nu ^{\alpha },\overline{\nabla }_{\frac{\partial }{\partial
x_{k}}}\left( \lambda _{\beta }\nu ^{\beta }+\mu ^{m}\frac{\partial \psi }{%
\partial x_{m}}\right) \right\rangle \\
&=&\left\langle \overline{\nabla }_{\frac{\partial }{\partial x_{i}}}\nu
^{\alpha },\lambda _{\beta ,k}\nu ^{\beta }+\lambda _{\beta }\overline{%
\nabla }_{\frac{\partial }{\partial x_{k}}}\nu ^{\beta }+\mu ^{m,k}\frac{%
\partial \psi }{\partial x_{m}}+\mu ^{m}\overline{\nabla }_{\frac{\partial }{%
\partial x_{k}}}\frac{\partial \psi }{\partial x_{m}}\right\rangle \\
&=&\lambda _{\beta ,k}\left\langle \overline{\nabla }_{\frac{\partial }{%
\partial x_{i}}}\nu ^{\alpha },\nu ^{\beta }\right\rangle +\lambda _{\beta
}\left\langle \overline{\nabla }_{\frac{\partial }{\partial x_{i}}}\nu
^{\alpha },\overline{\nabla }_{\frac{\partial }{\partial x_{k}}}\nu ^{\beta
}\right\rangle -\mu ^{m,k}\left( A_{\alpha }\right) _{im} \\
&&+\mu ^{m}\left\langle \overline{\nabla }_{\frac{\partial }{\partial x_{i}}%
}\nu ^{\alpha },\overline{\nabla }_{\frac{\partial }{\partial x_{k}}}\frac{%
\partial \psi }{\partial x_{m}}\right\rangle .
\end{eqnarray*}%
By noticing that%
\begin{eqnarray*}
\left\langle \overline{\nabla }_{\frac{\partial }{\partial x_{i}}}\nu
^{\alpha },\overline{\nabla }_{\frac{\partial }{\partial x_{k}}}\nu ^{\beta
}\right\rangle &=&\left\langle \overline{\nabla }_{\frac{\partial }{\partial
x_{i}}}\nu ^{\alpha },\frac{\partial \psi }{\partial x_{j}}\right\rangle
\left\langle \overline{\nabla }_{\frac{\partial }{\partial x_{k}}}\nu
^{\beta },\frac{\partial \psi }{\partial x_{j}}\right\rangle +\left\langle 
\overline{\nabla }_{\frac{\partial }{\partial x_{i}}}\nu ^{\alpha },\nu
^{\gamma }\right\rangle \left\langle \nu ^{\gamma },\overline{\nabla }_{%
\frac{\partial }{\partial x_{k}}}\nu ^{\beta }\right\rangle \\
&=&\left( A_{\alpha }A_{\beta }\right) _{ik}+\left\langle D_{\frac{\partial 
}{\partial x_{i}}}\nu ^{\alpha },\nu ^{\gamma }\right\rangle \left\langle D_{%
\frac{\partial }{\partial x_{k}}}\nu ^{\beta },\nu ^{\gamma }\right\rangle
\end{eqnarray*}%
where $D$ stands for the connection on the normal fiber bundle.

Consequently%
\begin{eqnarray*}
\left\langle \overline{\nabla }_{\frac{\partial }{\partial x_{i}}}\nu
^{\alpha },\overline{\nabla }_{\frac{\partial }{\partial x_{k}}%
}X\right\rangle &=&\lambda _{\beta ,k}\left\langle D_{\frac{\partial }{%
\partial x_{i}}}\nu ^{\alpha },\nu ^{\beta }\right\rangle +\lambda _{\beta
}\left( A_{\alpha }A_{\beta }\right) _{ik} \\
&&+\lambda _{\beta }\left\langle D_{\frac{\partial }{\partial x_{i}}}\nu
^{\alpha },\nu ^{\gamma }\right\rangle \left\langle D_{\frac{\partial }{%
\partial x_{k}}}\nu ^{\beta },\nu ^{\gamma }\right\rangle \\
&&-\mu _{,k}^{m}\left( A_{\alpha }\right) _{im}+\mu ^{m}\left\langle D_{%
\frac{\partial }{\partial x_{i}}}\nu ^{\alpha },\overline{\nabla }_{\frac{%
\partial }{\partial x_{k}}}\frac{\partial \psi }{\partial x_{m}}%
\right\rangle .
\end{eqnarray*}

We get%
\begin{eqnarray*}
g^{jk}\frac{\partial \left( A_{\alpha }\right) _{ik}}{\partial t}
&=&-g^{jk}R^{\overline{M}}(\nu ^{\alpha },\frac{\partial \psi }{\partial
x_{k}},\frac{\partial \psi }{\partial x_{i}},X)+g^{jk}\left( \lambda
_{\alpha },_{ki}+\mu ^{l,i}\left( A_{\alpha }\right) _{kl}+\mu ^{l}\left(
A_{\alpha }\right) _{kl_{,i}}\right) \\
&&+g^{jk}\left\langle \frac{\partial \nu ^{\alpha }}{\partial t},\nu ^{\beta
}\right\rangle \left( A_{\beta }\right) _{ik}-g^{jk}\lambda _{\beta
,k}\left\langle D_{\frac{\partial }{\partial x_{i}}}\nu ^{\alpha },\nu
^{\beta }\right\rangle -g^{jk}\lambda _{\beta }\left( A_{\alpha }A_{\beta
}\right) _{ik} \\
&&-\lambda _{\beta }g^{jk}\left\langle D_{\frac{\partial }{\partial x_{i}}%
}\nu ^{\alpha },\nu ^{\gamma }\right\rangle \left\langle D_{\frac{\partial }{%
\partial x_{k}}}\nu ^{\beta },\nu ^{\gamma }\right\rangle \\
&&+g^{jk}\mu _{,k}^{m}\left( A_{\alpha }\right) _{im}-\mu
^{m}g^{jk}\left\langle D_{\frac{\partial }{\partial x_{i}}}\nu ^{\alpha },%
\overline{\nabla }_{\frac{\partial }{\partial x_{k}}}\frac{\partial \psi }{%
\partial x_{m}}\right\rangle .
\end{eqnarray*}%
Taking into account formula (\ref{10}), we get%
\begin{eqnarray}
\frac{\partial \left( A_{\alpha }\right) _{i}^{j}}{\partial t} &=&-g^{jk}R^{%
\overline{M}}(\nu ^{\alpha },\frac{\partial \psi }{\partial x_{k}},\frac{%
\partial \psi }{\partial x_{i}},X)+g^{jk}\left( \lambda _{\alpha },_{ki}+\mu
^{l}\left( A_{\alpha }\right) _{kl_{,i}}\right)  \notag \\
&&+g^{jk}\left\langle \frac{\partial \nu ^{\alpha }}{\partial t},\nu ^{\beta
}\right\rangle \left( A_{\beta }\right) _{ik}-g^{jk}\lambda _{\beta
,k}\left\langle D_{\frac{\partial }{\partial x_{i}}}\nu ^{\alpha },\nu
^{\beta }\right\rangle  \notag \\
&&-\lambda _{\beta }g^{jk}\left\langle D_{\frac{\partial }{\partial x_{i}}%
}\nu ^{\alpha },\nu ^{\gamma }\right\rangle \left\langle D_{\frac{\partial }{%
\partial x_{k}}}\nu ^{\beta },\nu ^{\gamma }\right\rangle  \TCItag{12}
\label{12} \\
&&-\mu ^{m}g^{jk}\left\langle D_{\frac{\partial }{\partial x_{i}}}\nu
^{\alpha },\overline{\nabla }_{\frac{\partial }{\partial x_{k}}}\frac{%
\partial \psi }{\partial x_{m}}\right\rangle +\lambda _{\beta }\left(
A_{\alpha }A_{\beta }\right) _{i}^{j}.  \notag
\end{eqnarray}%
To compute $\frac{\partial \sigma _{u}}{\partial t}$ we multiply both sides
of (\ref{12}) by $\left( T_{\alpha _{b}(u)}\right) _{j}^{i}$ and sum.

For the continuation of the calculations we will need the following lemma
which is a form of the property (4) of the proposition (\ref{prop1}).

\begin{lemma}
\label{lem1} For any $\lambda =(\lambda _{1},...,\lambda _{m})\in R^{m}$,
the symmetric functions $\sigma _{u}$ fulfill the following recurrence
relation:%
\begin{equation*}
\sum_{\alpha ,\beta }\lambda _{\beta }\text{tr}(A_{\alpha }A_{\beta
}T_{\beta _{b}\alpha _{b}(u)})=-\left\langle \lambda ,u\right\rangle \sigma
_{u}+\sum_{\beta }\lambda _{\beta }\text{tr}\left( A_{\beta }\right) \sigma
_{\beta _{b}(u)}
\end{equation*}%
where \ $\left\langle \lambda ,u\right\rangle =\sum_{\alpha }\lambda
_{\alpha }u_{\alpha }.$
\end{lemma}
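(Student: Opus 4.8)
The plan is to reduce the identity to the uniformly weighted relation (4) of Proposition \ref{prop1}, the price being that the weights $\lambda_\beta$ force me to isolate the individual summands of relation (2) rather than their sum. The first step is therefore to establish the refined identity
\begin{equation*}
\mathrm{tr}(A_\beta T_{\beta_b(u)}) = u_\beta \sigma_u \qquad (\text{for each fixed } \beta),
\end{equation*}
which sharpens property (2) of Proposition \ref{prop1}; summing over $\beta$ recovers $|u|\sigma_u = \sum_\beta \mathrm{tr}(A_\beta T_{\beta_b(u)})$ since $|u| = \sum_\beta u_\beta$. To prove it I would consider the rescaled family $A^s = (s_1 A_1, \dots, s_q A_q)$ for $s=(s_1,\dots,s_q)$ near $(1,\dots,1)$. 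Since $P_{A^s}(t) = \det(\sum_\beta s_\beta t_\beta A_\beta + \mathrm{id}_V) = P_A(s_1 t_1, \dots, s_q t_q)$, comparing the coefficients of $t^u$ gives the homogeneity $\sigma_u(A^s) = s^u\,\sigma_u(A) = \prod_\beta s_\beta^{u_\beta}\,\sigma_u(A)$. Differentiating in $s_\beta$ at $s=(1,\dots,1)$ yields $u_\beta\sigma_u$ on one side, while the defining relation \eqref{1} applied to the curve $s_\beta \mapsto A^s$ (whose velocity is $\partial_{s_\beta}A^{s}_{\gamma}\big|_{s=1} = \delta_{\gamma\beta}A_\beta$ and whose GNT at $s=1$ is $T_{\gamma_b(u)}(A)$) gives $\mathrm{tr}(A_\beta T_{\beta_b(u)})$ on the other; equating the two proves the refined identity.

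With this in hand, the second step contracts the double sum. Fixing $\beta$ and using the cyclic invariance of the trace together with the commutativity of the musical operations, $\beta_b\alpha_b(u) = \alpha_b\beta_b(u)$, I write
\begin{equation*}
\sum_\alpha \mathrm{tr}(A_\alpha A_\beta T_{\beta_b\alpha_b(u)}) = \mathrm{tr}\Big(A_\beta \sum_\alpha T_{\alpha_b\beta_b(u)} A_\alpha\Big).
\end{equation*}
The inner sum is exactly the right member of the second form of property (6) of Proposition \ref{prop1} applied with $u$ replaced by $\beta_b(u)$, namely $\sum_\alpha T_{\alpha_b\beta_b(u)} A_\alpha = \sigma_{\beta_b(u)} 1_V - T_{\beta_b(u)}$. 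Taking the trace against $A_\beta$ therefore gives
\begin{equation*}
\sum_\alpha \mathrm{tr}(A_\alpha A_\beta T_{\beta_b\alpha_b(u)}) = \sigma_{\beta_b(u)}\,\mathrm{tr}(A_\beta) - \mathrm{tr}(A_\beta T_{\beta_b(u)}).
\end{equation*}

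Finally, I multiply this identity by $\lambda_\beta$, sum over $\beta$, and substitute the refined identity into the last term: the first term becomes $\sum_\beta \lambda_\beta\, \mathrm{tr}(A_\beta)\,\sigma_{\beta_b(u)}$ and the second becomes $\sum_\beta \lambda_\beta u_\beta\, \sigma_u = \langle\lambda, u\rangle\,\sigma_u$, which is precisely the claimed relation. The only genuinely new input beyond Proposition \ref{prop1} is the refined identity of the first step; property (4) corresponds to the special case $\lambda=(1,\dots,1)$, for which the unweighted property (2) alone suffices and the scaling argument can be bypassed. I expect the main obstacle to be justifying the refined, un-summed identity $\mathrm{tr}(A_\beta T_{\beta_b(u)}) = u_\beta\sigma_u$, since Proposition \ref{prop1} records only its sum over $\beta$; the homogeneity of $\sigma_u$ under the coordinatewise rescaling $A^s$ is what makes each summand individually computable.
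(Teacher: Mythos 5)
Your proposal is correct and follows essentially the same route as the paper: both unfold the double sum via the recurrence (6) of Proposition \ref{prop1} applied to $T_{\beta_b(u)}$ and then evaluate $\mathrm{tr}(A_\beta T_{\beta_b(u)})$ by differentiating $\sigma_u$ along a rescaling curve through $A$ using the defining relation \eqref{1}. The only cosmetic difference is that you isolate the per-index identity $\mathrm{tr}(A_\beta T_{\beta_b(u)})=u_\beta\sigma_u$ by rescaling one factor at a time, whereas the paper rescales all factors at once along $A(\tau)=(1_V+\lambda\tau)A$ and obtains only the weighted sum $\sum_\beta\lambda_\beta\mathrm{tr}(A_\beta T_{\beta_b(u)})=\langle\lambda,u\rangle\sigma_u$; the two statements are equivalent (take $\lambda$ to be a standard basis vector), so nothing new is needed.
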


\begin{proof}
First, we have 
\begin{eqnarray*}
\sum_{\beta }\lambda _{\beta }A_{\beta }T_{\beta _{b}\left( u\right) }
&=&\sum_{\beta }\lambda _{\beta }A_{\beta }\left( \sigma _{\beta _{b}\left(
u\right) }\right) 1_{V}-\sum_{\alpha }A_{\alpha }T_{\alpha _{b}\beta
_{b}\left( u\right) }) \\
&=&\sum_{\beta }\lambda _{\beta }\sigma _{\beta _{b}\left( u\right)
}A_{\beta }-\sum_{\alpha ,\beta }\lambda _{\beta }A_{\beta }A_{\alpha
}T_{\alpha _{b}\beta _{b}\left( u\right) }.
\end{eqnarray*}%
By passing to the traces, 
\begin{equation*}
\sum_{\alpha ,\beta }\lambda _{\beta }tr\left( A_{\beta }A_{\alpha
}T_{\alpha _{b}\beta _{b}\left( u\right) }\right) =\sum_{\beta }\lambda
_{\beta }tr(A_{\beta })\sigma _{\beta _{b}\left( u\right) }-\sum_{\beta
}\lambda _{\beta }tr\left( A_{\beta }T_{\beta _{b}\left( u\right) }\right) .
\end{equation*}%
It remains to compute the last term of the right hand side of the above
equality. To do so, we consider the curve $A(\tau )=(1_{V}+\lambda \tau )A$.
We get $A(0)=A$ and$\frac{dA\left( \tau \right) }{d\tau }\left\vert \tau
=0\right. =\lambda A.$ The expending of the polynomial $P_{A\left( \tau
\right) }\left( t\right) =\det (1_{V}+tA\left( \tau \right) )=\sum_{u}%
\widetilde{\sigma }_{u}\left( \tau \right) t^{u}$ with $\widetilde{\sigma }%
_{u}\left( \tau \right) =(1+\lambda \tau )^{u}\sigma _{u}$. We have%
\begin{equation*}
\frac{d\widetilde{\sigma }_{u}\left( \tau \right) }{d\tau }\left\vert \tau
=0\right. =\sum_{\beta }u_{\beta }\lambda _{\beta }\sigma _{u}=\left\langle
\lambda ,u\right\rangle \sigma _{u}
\end{equation*}%
and 
\begin{equation*}
\sum_{\beta }tr(\frac{d}{dt}A_{\beta }(\tau )\mid _{\tau =0}.T_{\beta
_{b}(u)})=\sum_{\beta }\lambda _{\beta }tr(A_{\beta }T_{\alpha _{\beta
}(u)}).
\end{equation*}%
By the definition of the (GTN) Newton transformations we obtain%
\begin{equation*}
\sum_{\beta }\lambda _{\beta }tr(A_{\beta }T_{\alpha _{\beta
}(u)})=\left\langle \lambda ,u\right\rangle \sigma _{u}\text{.}
\end{equation*}
\end{proof}

First, by Lemma (\ref{lem1}), we have

\begin{eqnarray}
\sum_{\alpha ,\beta }\lambda _{\beta }\left( A_{\alpha }A_{\beta }\right)
_{i}^{j}\left( T_{\alpha _{b}(u)}\right) _{j}^{i} &=&\sum_{\alpha ,\beta
}\lambda _{\beta }\text{tr}\left( A_{\alpha }A_{\beta }T_{\alpha
_{b}(u)}\right)  \notag \\
&=&\sum_{\alpha ,\beta }\lambda _{\beta }\text{tr}\left( A_{\alpha }A_{\beta
}T_{\beta _{b}\beta ^{\#}\alpha _{b}(u)}\right)  \TCItag{13}  \label{13} \\
&=&\sum_{\alpha ,\beta }\lambda _{\beta }\text{tr}\left( A_{\alpha }A_{\beta
}T_{\beta _{b}\alpha _{b}\beta ^{\#}(u)}\right)  \notag \\
&=&-\left\langle \lambda ,\beta ^{\#}(u)\right\rangle \sigma _{\beta
^{\#}(u)}+\sum_{\beta }\lambda _{\beta }\text{tr}\left( A_{\beta }\right)
.\sigma _{u}  \notag
\end{eqnarray}%
we also write, 
\begin{equation}
g^{jk}\sum_{\alpha }\lambda _{\alpha },_{ki}\left( T_{\alpha _{b}(u)}\right)
_{j}^{i}=\sum_{\alpha }\lambda _{\alpha },_{ki}\left( T_{\alpha
_{b}(u)}\right) ^{ki}  \tag{14}  \label{14}
\end{equation}%
By the Codazzi formula,we get 
\begin{eqnarray*}
g^{jk}\mu ^{l}\sum_{\alpha }\left( A_{\alpha }\right) _{kl},_{i}\left(
T_{\alpha _{b}(u)}\right) _{j}^{i} &=&g^{jk}\mu ^{l}\sum_{\alpha }\left(
A_{\alpha }\right) _{ki},_{l}\left( T_{\alpha _{b}(u)}\right) _{j}^{i} \\
&&+g^{jk}\left( R^{\overline{M}}\right) ^{\bot }(\frac{\partial \psi }{%
\partial x_{i}},X^{\bot })\frac{\partial \psi }{\partial x_{k}}\left(
T_{\alpha _{b}(u)}\right) _{j}^{i}
\end{eqnarray*}%
and by equation (\ref{1}), we infer that%
\begin{eqnarray}
g^{jk}\mu ^{l}\sum_{\alpha }\left( A_{\alpha }\right) _{kl},_{i}\left(
T_{\alpha _{b}(u)}\right) _{j}^{i} &=&\sum_{\alpha }\mu ^{l}\text{tr}\left(
\left( A_{\alpha }\right) ,_{l}\left( T_{\alpha _{b}(u)}\right) \right) 
\notag \\
&&+g^{jk}\sum_{\alpha }\left( R^{\overline{M}}\right) ^{\bot }(\frac{%
\partial \psi }{\partial x_{i}},X^{\bot })\frac{\partial \psi }{\partial
x_{k}}\left( T_{\alpha _{b}(u)}\right) _{j}^{i}  \TCItag{16}  \label{16} \\
&=&\mu ^{l}\sigma _{u,l}+g^{jk}\left( R^{\overline{M}}\right) ^{\bot }(\frac{%
\partial \psi }{\partial x_{i}},X^{\bot })\frac{\partial \psi }{\partial
x_{k}}\left( T_{\alpha _{b}(u)}\right) _{j}^{i}.  \notag
\end{eqnarray}%
Hence 
\begin{eqnarray}
\frac{\partial \sigma _{u}}{\partial t} &=&-g^{jk}\sum_{\alpha }R^{\overline{%
M}}(\nu ^{\alpha },\frac{\partial \psi }{\partial x_{k}},\frac{\partial \psi 
}{\partial x_{i}},X)\left( T_{\alpha _{b}(u)}\right) _{j}^{i}+\sum_{\alpha
}\left( T_{\alpha _{b}(u)}\right) ^{ij}\lambda _{\alpha },_{ij}  \notag \\
&&+\mu ^{l}\sigma _{u,l}+g^{jk}\left( R^{\overline{M}}\right) ^{\bot }(\frac{%
\partial \psi }{\partial x_{i}},X^{\bot })\frac{\partial \psi }{\partial
x_{k}}\left( T_{\alpha _{b}(u)}\right) _{j}^{i}  \notag \\
&&+g^{jk}\left\langle \frac{\partial \nu ^{\alpha }}{\partial t},\nu ^{\beta
}\right\rangle \left( A_{\beta }\right) _{ik}\left( T_{\alpha
_{b}(u)}\right) _{j}^{i}-g^{jk}\lambda _{\beta ,k}\left\langle D_{\frac{%
\partial }{\partial x_{i}}}\nu ^{\alpha },\nu ^{\beta }\right\rangle \left(
T_{\alpha _{b}(u)}\right) _{j}^{i}  \TCItag{17}  \label{17} \\
&&-\lambda _{\beta }g^{jk}\left\langle D_{\frac{\partial }{\partial x_{i}}%
}\nu ^{\alpha },\nu ^{\gamma }\right\rangle \left\langle D_{\frac{\partial }{%
\partial x_{k}}}\nu ^{\beta },\nu ^{\gamma }\right\rangle \left( T_{\alpha
_{b}(u)}\right) _{j}^{i}  \notag \\
&&-\mu ^{m}g^{jk}\left\langle D_{\frac{\partial }{\partial x_{i}}}\nu
^{\alpha },\overline{\nabla }_{\frac{\partial }{\partial x_{k}}}\frac{%
\partial \psi }{\partial x_{m}}\right\rangle \left( T_{\alpha
_{b}(u)}\right) _{j}^{i}-\left\langle \lambda ,u\right\rangle
\dsum\limits_{\beta }\lambda _{\beta }\sigma _{\beta ^{\#}(u)}  \notag \\
&&+\sum_{\beta }\lambda _{\beta }\text{tr}\left( A_{\beta }\right) \sigma
_{u}.  \notag
\end{eqnarray}%
The expression of $\frac{\partial dV}{\partial t}$is standard and it is
given by:%
\begin{equation}
\frac{\partial dV}{\partial t}=\left( -\lambda _{\alpha }\text{tr(}A_{\alpha
}\text{)}+\mu _{,l}^{l}\right) dV.  \tag{18}  \label{18}
\end{equation}%
By expressions (\ref{17}) and (\ref{18}), we infer that:%
\begin{equation*}
\frac{\partial \sigma _{u}}{\partial t}+\sigma _{u}\left( -\lambda _{\alpha }%
\text{tr(}A_{\alpha }\text{)}+\mu _{,l}^{l}\right) =
\end{equation*}%
\begin{eqnarray}
&&-g^{jk}\sum_{\alpha }R^{\overline{M}}(\nu ^{\alpha },\frac{\partial \psi }{%
\partial x_{k}},\frac{\partial \psi }{\partial x_{i}},X)\left( T_{\alpha
_{b}(u)}\right) _{j}^{i}+\sum_{\alpha }\text{tr}\left( T_{\alpha _{b}(u)}%
\text{hess}(\lambda _{\alpha }\right) )  \notag \\
&&+g^{jk}\left( R^{\overline{M}}\right) ^{\bot }(\frac{\partial \psi }{%
\partial x_{i}},X^{\bot })\frac{\partial \psi }{\partial x_{k}}\left(
T_{\alpha _{b}(u)}\right) _{j}^{i}  \notag \\
&&+g^{jk}\left\langle \frac{\partial \nu ^{\alpha }}{\partial t},\nu ^{\beta
}\right\rangle \left( A_{\beta }\right) _{ik}\left( T_{\alpha
_{b}(u)}\right) _{j}^{i}-g^{jk}\lambda _{\beta ,k}\left\langle D_{\frac{%
\partial }{\partial x_{i}}}\nu ^{\alpha },\nu ^{\beta }\right\rangle \left(
T_{\alpha _{b}(u)}\right) _{j}^{i}  \TCItag{19}  \label{19} \\
&&-\lambda _{\beta }g^{jk}\left\langle D_{\frac{\partial }{\partial x_{i}}%
}\nu ^{\alpha },\nu ^{\gamma }\right\rangle \left\langle D_{\frac{\partial }{%
\partial x_{k}}}\nu ^{\beta },\nu ^{\gamma }\right\rangle \left( T_{\alpha
_{b}(u)}\right) _{j}^{i}  \notag \\
&&-\mu ^{m}g^{jk}\left\langle D_{\frac{\partial }{\partial x_{i}}}\nu
^{\alpha },\overline{\nabla }_{\frac{\partial }{\partial x_{k}}}\frac{%
\partial \psi }{\partial x_{m}}\right\rangle \left( T_{\alpha
_{b}(u)}\right) _{j}^{i}-\left\langle \lambda ,\beta ^{\#}(u)\right\rangle
\sigma _{\beta ^{\#}(u)}  \notag \\
&&+\left( \sigma _{u}\mu ^{l}\right) _{,l}.  \notag
\end{eqnarray}%
where hess is the hessian of $\lambda _{\alpha }.$

\section{Special cases}

To simplify the expression of the integrand in Theorem \ref{theorem1}, we
consider submanifod with flat normal bundle. First we recall the following
facts: for $x\in M$ the tangent space\ $T_{x}\left( \overline{M}^{n}\right) $
splits as:

\begin{equation*}
T_{x}\left( \overline{M}^{n}\right) =T_{x}\left( M^{m}\right) \oplus
N_{x}\left( M^{m}\right)
\end{equation*}%
where $T_{x}\left( M^{m}\right) $ is the tangent space of $M^{m}$ at $x$ and 
$N_{x}\left( M^{m}\right) =T_{x}\left( M^{m}\right) ^{\bot }$ the normal
space of $M^{m}$ at $x$.

Let $D$ denote the normal covariant derivative on a $m$-dimensional
submanifold $M^{n}$ of a Riemannian manifold $\overline{M}^{n}$ and consider
the curvature tensor of the normal bundle%
\begin{equation*}
R_{D}(X,Y)\nu =D_{X}D_{Y}\nu -D_{Y}D_{X}\nu -D_{\left[ X;Y\right] }\nu .
\end{equation*}%
The the normal bundle $N(M^{m})$ of $M^{n}$ in $\overline{M}^{n}$ is said
flat if and only if $R_{D}(x)=0$ for any $x\in M^{n}$ and $M^{n}$ is called
submanifold with flat normal bundle. The normal connection is called flat if
the normal bundle of $M^{n}$is flat. It is well known in this case there is
in each point $y$ of $\overline{M}^{n}$ an orthonormal basis $\left( \nu
^{1},...,\nu ^{n-m}\right) $ of $N(M^{m})$ such that each vector field $\nu
^{\alpha }$ is parallel in $N(M^{m})$ that is to say $\overline{\nabla }%
_{X}\nu ^{\alpha }=0$ for each $\nu ^{\alpha }\in N(M^{m})$ and $X\in
T(M^{m}).$ If the ambient manifold $\overline{M}^{n}$ has a constant
curvature $c$ then for any vector fields $\overline{X},\overline{Y},%
\overline{Z},$ the curvature tensor of $\overline{M}^{n}$ is given by 
\begin{equation*}
R^{\overline{M}^{n}}(\overline{X},\overline{Y})\overline{Z}=c\left(
\left\langle \overline{Z},\overline{Y}\right\rangle \overline{X}%
-\left\langle \overline{Z},\overline{X}\right\rangle \overline{Y}\right)
\end{equation*}%
so for $X$,$Y$ tangent and $\nu $ normal to $M^{n}$%
\begin{equation}
R_{D}(X,Y)\nu =0.  \tag{20}  \label{20}
\end{equation}%
As a consequence of formula (\ref{20}), we have

\begin{theorem}
\label{theorem2} Let $M^{m}$ be an $m$-dimensional closed submanifold of an $%
n$-dimensional space $\overline{M}$ $^{n}(c)$ of constant sectional
curvature $c.$ The first variation of the global $\sigma _{u}$-curvature is
given by:%
\begin{eqnarray*}
&&\frac{d}{dt}\left( \int_{M^{m}}\sigma _{u}dV\right) \\
&=&\int_{M^{m}}\left( -\left\langle \lambda ,\beta ^{\#}\left( u\right)
\right\rangle \sigma _{\beta ^{\#}(u)}+c\left( m+1-\left\vert u\right\vert
\right) \sum_{\alpha }\lambda _{\alpha }\sigma _{\alpha _{b}(u)}\right) dV
\end{eqnarray*}
\end{theorem}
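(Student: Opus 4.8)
The plan is to take the pointwise identity \eqref{19}---which already repackages $\frac{\partial}{\partial t}(\sigma_u\,dV)$---integrate it over the closed manifold $M^m$, and then collapse the right-hand side using the two standing hypotheses of this section: the ambient space has constant curvature $c$, and the normal bundle is flat. Since $M^m$ is closed and $\frac{\partial dV}{\partial t}=(-\lambda_\alpha\mathrm{tr}(A_\alpha)+\mu^l_{,l})\,dV$ by \eqref{18}, the left-hand side of \eqref{19} integrates to $\frac{d}{dt}\int_{M^m}\sigma_u\,dV$, so it suffices to integrate the right-hand side of \eqref{19} line by line.

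First I would discard the terms that integrate to zero. The last term $(\sigma_u\mu^l)_{,l}$ is a pure divergence, so it drops by the divergence theorem. The Hessian term $\sum_\alpha\mathrm{tr}(T_{\alpha_b(u)}\,\mathrm{hess}(\lambda_\alpha))$ I would remove by integrating by parts and invoking that, in a space of constant curvature, the generalized Newton transformations are divergence free (the multi-operator analogue of the classical $\mathrm{div}\,T_r=0$ in space forms, which rests on the Codazzi equation). Choosing the normal frame $(\nu^1,\dots,\nu^q)$ to be $D$-parallel along $M$ gives $D_{\partial/\partial x_i}\nu^\alpha=0$, which annihilates the three terms carrying a factor $\langle D\nu^\alpha,\cdot\rangle$; normalising the same frame to be normal-parallel in the deformation parameter kills the rotation term $\langle\frac{\partial\nu^\alpha}{\partial t},\nu^\beta\rangle$. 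Finally the normal-curvature term $(R^{\overline M})^{\bot}$ vanishes because $R_D=0$ by \eqref{20}.

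Two contributions then remain. For the ambient-curvature term I would substitute $R^{\overline M}(\overline X,\overline Y)\overline Z=c(\langle\overline Z,\overline Y\rangle\overline X-\langle\overline Z,\overline X\rangle\overline Y)$; since $\frac{\partial\psi}{\partial x_i}$ is tangent and $\nu^\alpha$ normal, only one term survives and $R^{\overline M}(\nu^\alpha,\frac{\partial\psi}{\partial x_k},\frac{\partial\psi}{\partial x_i},X)=-c\,\lambda_\alpha g_{ik}$, using $\langle X,\nu^\alpha\rangle=\lambda_\alpha$. Contracting with $-g^{jk}(T_{\alpha_b(u)})^i_j$ and using $g^{jk}g_{ik}=\delta^j_i$ reduces this term to $c\sum_\alpha\lambda_\alpha\,\mathrm{tr}(T_{\alpha_b(u)})$, whereupon property (3) of Proposition \ref{prop1}, $\mathrm{tr}(T_{\alpha_b(u)})=(m-|\alpha_b(u)|)\sigma_{\alpha_b(u)}=(m+1-|u|)\sigma_{\alpha_b(u)}$, yields precisely $c(m+1-|u|)\sum_\alpha\lambda_\alpha\sigma_{\alpha_b(u)}$. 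The term $-\langle\lambda,\beta^{\#}(u)\rangle\sigma_{\beta^{\#}(u)}$ is untouched by either hypothesis and survives verbatim. Assembling the two pieces gives the stated formula.

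The substitutions above and the index bookkeeping are routine; the step I expect to demand the most care is the honest vanishing of the frame-dependent contributions. Indeed, the factor $\langle\frac{\partial\nu^\alpha}{\partial t},\nu^\beta\rangle$ is skew in $(\alpha,\beta)$, but the coefficient $\mathrm{tr}(A_\beta T_{\alpha_b(u)})$ it multiplies is \emph{not} symmetric in $(\alpha,\beta)$, so the rotation term does not cancel of its own accord: one genuinely has to fix the gauge, and the normal-parallel frame---available exactly because the normal bundle is flat---is what forces $\langle\frac{\partial\nu^\alpha}{\partial t},\nu^\beta\rangle=0$ at $t=0$. The other delicate point is the divergence-freeness of the generalized Newton transformations in constant curvature needed to delete the Hessian term; this is the sole place where a short separate argument (via Codazzi, or an appeal to \cite{An}) is required rather than a direct substitution.
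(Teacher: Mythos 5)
Your proposal is correct and follows the same skeleton as the paper's proof: integrate identity (\ref{19}) over the closed manifold, evaluate the ambient curvature term with the constant-curvature formula and property (3) of Proposition \ref{prop1} to get $c\left( m+1-\left\vert u\right\vert \right)\sum_{\alpha }\lambda _{\alpha }\sigma _{\alpha _{b}(u)}$, use flatness of the normal bundle (formula (\ref{20})) and a $D$-parallel frame to kill the $(R^{\overline{M}})^{\bot }$ term and every term carrying $D\nu ^{\alpha }$, and carry $-\left\langle \lambda ,\beta ^{\#}(u)\right\rangle \sigma _{\beta ^{\#}(u)}$ through unchanged. Where you genuinely diverge is the Hessian term $\int \mathrm{tr}\left( T_{\alpha _{b}(u)}\,\mathrm{hess}(\lambda _{\alpha })\right)$: you integrate by parts twice and invoke divergence-freeness of the generalized Newton transformations in a space form with flat normal bundle (Reilly's classical route, available in higher codimension from \cite{An}), whereas the paper asserts pointwise that $\lambda _{\alpha },_{i}=\left\langle \overline{\nabla }_{i}X^{\bot },\nu ^{\alpha }\right\rangle +\left\langle X^{\bot },\overline{\nabla }_{i}\nu ^{\alpha }\right\rangle =0$, hence $\lambda _{\alpha },_{ij}=0$. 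That displayed identity is just metric compatibility and does not vanish unless $X^{\bot }$ is itself $D$-parallel, so your argument, which only needs the integral to vanish, is the more defensible of the two. Likewise for the rotation term $\left\langle \partial _{t}\nu ^{\alpha },\nu ^{\beta }\right\rangle \mathrm{tr}(A_{\beta }T_{\alpha _{b}(u)})$ you make the gauge choice explicit (propagate the frame parallelly in $t$) and rightly note that skew-symmetry in $(\alpha ,\beta )$ alone does not cancel it; the paper reaches the same end by rewriting $\left\langle \partial _{t}\nu ^{\alpha },\nu ^{\beta }\right\rangle$ as $\left\langle \overline{\nabla }_{X}\nu ^{\alpha },\nu ^{\beta }\right\rangle$ and folding it into the blanket cancellation of ``terms containing normal covariant derivatives.'' Two small obligations remain on your side: state explicitly that the entire computation is performed in the $D$-parallel frame (in codimension greater than one both $\sigma _{u}$ and the rotation term are frame-dependent, so this is a genuine normalization, not a convenience), and either cite the divergence formula for $T_{u}$ from \cite{An} or derive it from Codazzi in that frame, since it is the one ingredient your argument uses that the statement of Theorem \ref{theorem1} does not already supply.
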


\begin{proof}
Indeed, we have:

\begin{eqnarray*}
g^{jk}\sum_{\alpha }R^{\overline{M}}(\nu ^{\alpha },\frac{\partial \psi }{%
\partial x_{k}},\frac{\partial \psi }{\partial x_{i}},X)\left( T_{\alpha
_{b}(u)}\right) _{j}^{i} &=&-c\left\langle \frac{\partial \psi }{\partial
x_{i}},\frac{\partial \psi }{\partial x_{k}}\right\rangle \sum_{\alpha
}\left\langle X,v^{\alpha }\right\rangle \left( T_{\alpha _{b}(u)}\right)
_{j}^{i} \\
&=&-cg^{jk}g_{ik}\sum_{\alpha ,\beta }\lambda _{\beta }\left\langle \nu
^{\beta },v^{\alpha }\right\rangle \left( T_{\alpha _{b}(u)}\right) _{j}^{i}
\\
&=&-c\left( m+1-\left\vert u\right\vert \right) \sum_{\alpha }\lambda
_{\alpha }\sigma _{\alpha _{b}(u)}.
\end{eqnarray*}%
On the other hand since the normal connection of $M^{n}$ is flat, for every $%
x\in M^{m}$ there exist an orthonormal vector fields $\nu ^{1},...\nu ^{n-m}$
in an open neighborhood $U$ of $x$ such $D_{Y}\nu =0$ in $U$ where $Y\in
T_{x}M$. Let $c_{s}$ be a curve on $\overline{M}^{n}$ such that $\nu
^{\alpha }=\left. \frac{\partial }{\partial s}\right\vert _{s=0}c_{s}$. Then 
$\left. \frac{\partial \nu ^{\alpha }}{\partial t}\right\vert _{t=0}=\left. 
\frac{\partial ^{2}}{\partial t\partial s}\right\vert _{t=s=0}c_{s}\left(
\psi _{t}\right) =\overline{\nabla }_{X}\nu ^{\alpha }$ and $\left\langle
\left. \frac{\partial \nu ^{\alpha }}{\partial t}\right\vert _{t=0},\nu
^{\beta }\right\rangle =\left\langle \overline{\nabla }_{X}\nu ^{\alpha
},\nu ^{\beta }\right\rangle .$ So Since the integral of a differentiable
form $\omega $ on a manifold $M$ is defined as the sum of integrals of this
form multiplied by an element $\rho _{i}$ of a partition $\left( \rho
_{i}\right) _{i\in I}$ subordinated to an open cover $\left( U_{j}\right)
_{j\in J}$ of $M$ $\ $over $U_{j(i)}$ which contains the support of $\rho
_{i}$; it follows that the integrals of all terms containing the normal
covariant derivatives in the expression (\ref{19}) cancel. The same is also
true for $\lambda _{\alpha },_{ij}$ since in an open neighborhood $U$ of
each point $x\in M^{n}$, we have 
\begin{equation*}
\lambda _{\alpha },_{i}=\left\langle \overline{\nabla }_{\frac{\partial }{%
\partial x_{i}}}X^{\bot },\nu ^{\alpha }\right\rangle +\left\langle X^{\bot
},\overline{\nabla }_{\frac{\partial }{\partial x_{i}}}\nu ^{\alpha
}\right\rangle =0
\end{equation*}%
hence $\lambda _{\alpha },_{ij}=0$ in $U.$ Consequently the integral on $%
M^{n}$ of $\left( T_{\alpha _{b}(u)}\right) ^{ij}\lambda _{\alpha },_{ij}$
cancels also.
\end{proof}

\subsection{Submanifolds of Euclidean space}

\begin{definition}
A submanifold $M^{m}$ of an Euclidean space $E^{n}$ is said $\sigma _{u}$%
-minimal if $\sigma _{v}$ vanishes identically where $v\in N(n-m)$ is a
multi-index with length $\left\vert v\right\vert =\left\vert u\right\vert +1$%
.
\end{definition}

As in the paper of Reilly (see \cite{Reilly}) we will express the minimality
of a submanifold of an Euclidean space $E^{n}$in terms of partial
differential equations. Let $\psi =(\psi _{1},...,\psi _{n+1})$ be the
position vector of the submanifold $M^{m}$ and $\psi _{,ij}=(\psi
_{1,ij},...,\psi _{n,ij})$ the second covariant derivative of $x$ on $M^{m}$.

\begin{eqnarray*}
\psi _{,ij} &=&\frac{\partial ^{2}\psi }{\partial x_{i}\partial x_{j}}-d\psi
(\nabla _{\frac{\partial }{\partial x_{i}}}\frac{\partial }{\partial x_{j}})
\\
&=&\frac{\partial ^{2}\psi }{\partial x_{i}\partial x_{j}}-\nabla _{\psi
_{\ast }\frac{\partial }{\partial x_{i}}}\psi _{\ast }\frac{\partial }{%
\partial x_{j}} \\
&=&\left\langle \frac{\partial ^{2}\psi }{\partial x_{i}\partial x_{j}}%
,N\right\rangle N \\
&=&\sum_{\alpha }\lambda _{\alpha }\left( A_{\alpha }\right) _{ij}N
\end{eqnarray*}%
where $N$ denotes a normal vector field to $M^{m}$, $\lambda _{\alpha
}=\left\langle N,\nu ^{\alpha }\right\rangle $ and $\left( \nu ^{1},...,\nu
^{n-m}\right) $ an orthonormal basis to $M^{m}$.

Hence 
\begin{equation*}
\psi _{,ij}\left( T_{u}\right) ^{ij}=\sum_{\alpha }\lambda _{\alpha }\left(
A_{\alpha }\right) _{ij}\left( T_{\alpha _{b}\alpha ^{\#}(u)}\right)
^{ij}N=\left\langle \lambda ,\alpha ^{\#}(u)\right\rangle \sigma _{\alpha
^{\#}(u)}N.
\end{equation*}

\subsection{Submanifolds of the unit round sphere}

\begin{definition}
A submanifold in the unit round sphere is said $\sigma _{u}$-minimal with if%
\begin{equation*}
\left\langle \lambda ,\beta ^{\#}\left( u\right) \right\rangle \sigma
_{\beta ^{\#}(u)}-\left( m+1-\left\vert u\right\vert \right) \sum_{\alpha
}\lambda _{\alpha }\sigma _{\alpha _{b}(u)}=0
\end{equation*}
\end{definition}

Let $\psi =(\psi _{1},...,\psi _{n})$ be the position vector of the
hypersurface $M^{m}$ in the unit round sphere $S^{n}$ and $\psi _{,ij}=(\psi
_{1,ij},...,\psi _{n,ij})$ the second covariant derivative of $x$ on $M^{m}$%
. If $\nabla $ denotes the covariant derivative on $M^{n}$ induced by the
covariants derivative $\nabla ^{S^{n}}$ on the unit round sphere. We have 
\begin{eqnarray}
\psi _{,ij} &=&\nabla _{\frac{\partial }{x_{j}}}\nabla _{\frac{\partial }{%
\partial x_{i}}}\psi =\left( \nabla _{\frac{\partial }{x_{j}}}d\psi \right) (%
\frac{\partial }{\partial x_{i}})  \notag \\
&=&\nabla _{\frac{\partial }{x_{j}}}d\psi (\frac{\partial }{\partial x_{i}}%
)-d\psi \left( \nabla _{\frac{\partial }{x_{j}}}\frac{\partial }{\partial
x_{i}}\right)  \notag \\
&=&\left\langle \nabla _{\frac{\partial }{x_{j}}}^{S^{n}}d\psi (\frac{%
\partial }{\partial x_{i}}),N\right\rangle N  \notag \\
&=&\dsum\limits_{\alpha }\lambda _{\alpha }\left\langle \nabla _{\frac{%
\partial }{x_{j}}}^{S^{n}}d\psi (\frac{\partial }{\partial x_{i}}),\nu
^{\alpha }\right\rangle N  \TCItag{28}  \label{28} \\
&=&\dsum\limits_{\alpha }\lambda _{\alpha }\left( A_{\alpha }\right) _{ij}N.
\notag
\end{eqnarray}%
where $\nu ^{\alpha }$, $\alpha =1,...,n-m$ is a normal orthonormal basis to 
$M^{m}$, $N$ a normal vector field to $M^{m}$ (as submanifold of $S^{n})$
and $\lambda _{\alpha }=\left\langle N,\nu _{\alpha }\right\rangle $. In
order to characterize the $\sigma _{u}$-minimality of submanifolds of the
unit sphere, in case $\left\langle \lambda ,\beta _{b}\left( u\right)
\right\rangle \neq 0$, we multiply both sides of (\ref{28}) by $%
T_{u}^{ij}-\left( m-\left\vert u\right\vert +1\right) \sum_{\beta }\frac{%
\lambda _{\beta }}{\left\langle \lambda ,\beta _{b}\left( u\right)
\right\rangle }T_{\beta _{b}\alpha _{b}\left( u\right) }^{ij}$ and sum to
infer%
\begin{eqnarray*}
T_{u}^{ij}\psi _{,ij} &=&\dsum\limits_{\alpha }\lambda _{\alpha }T_{\alpha
_{b}\alpha ^{\#}\left( u\right) }^{ij}\left( A_{\alpha }\right) _{ij}-\left(
m-\left\vert u\right\vert +1\right) \sum_{\alpha ,\beta }T_{\alpha _{b}\beta
_{b}\left( u\right) }^{ij}\lambda _{\alpha }\frac{\lambda _{\beta }}{%
\left\langle \lambda ,\beta _{b}\left( u\right) \right\rangle }\left(
A_{\alpha }\right) _{ij} \\
&=&\left\langle \lambda ,\beta ^{\#}\left( u\right) \right\rangle \sigma
_{\beta ^{\#}(u)}-\left( m-\left\vert u\right\vert +1\right) \sum_{\alpha
}\lambda _{\alpha }\sigma _{\alpha _{b}(u)}.
\end{eqnarray*}%
if $\left\langle \lambda ,\beta _{b}\left( u\right) \right\rangle =\lambda
_{1}u_{1}+...+\lambda _{\beta -1}u_{\beta -1}+\lambda _{\beta }\left(
u_{\beta }-1\right) +...+\lambda _{m}u_{m}$ for any multi-index $u$ with
lenght $\left\vert u\right\vert \geq 2$ then $\lambda _{\alpha }=0$ and $%
M^{m}$ is a totally geodesic submanifold of $S^{n}$; if $\left\vert
u\right\vert =1$ necessarily $u_{\beta }=1$ and $M^{m}$ is $\sigma
_{(0,...,0,1,0...0)}$-minimal submanifold of $S^{n}$.


\begin{thebibliography}{99}
\bibitem{M.A1} M. Abdelmalek, M. Benalili, K. Niedzialomski, Geometric
Configuration of Riemannian submanifolds of arbitrary codimension, J. Geom
108 (2017), 803-823.

\bibitem{M.A2} M. Abdelmalek, M. Benalili, Some integral formulae on
weighted manifolds. arXiv preprint arXiv:2007.14935, 2020.

\bibitem{An} K. Andrzejewski, W. Kozlowski\ddag , K. Niedzialomski,
Generalized Newton transformation and its applications to intrinsic
geometry, Asian J. Math. Vol. 20, No. 2 (2016), 293-322.

\bibitem{Ben} M. Benalili, Some variational properties of the weighted $%
\sigma _{r}-$curvature for submanifolds in Riemannian manifolds. arXiv
preprint arXiv:2007.14494, (2020) .

\bibitem{Case} J. S. Case, A notion of the weighted $\sigma _{k}$-curvature
for manifolds with density, Adv. Math. 295 (2016), 150--194.

\bibitem{Cis} M. Ciska-Niedzia\l omska, Ma\l gorzata, K. Niedzia\l omski,
Rodin's formula in arbitrary codimension. Ann. Acad. Sci. Fenn. Math. 44
(2019), no. 1, 221--229.

\bibitem{K.N1} K. Niedzia\l omski, An integral formula for Riemannian
G-structures with applications to almost Hermitian and almost contact
structures. Ann. Global Anal. Geom. 56 (2019), no. 1, 167--192.

\bibitem{K.N2} K. Niedzia\l omski, Geometric structures on Riemannian and
Finsler manifolds---integral formulae, minimality, entropy. Folia Math. 20
(2018), no. 1, 3--16.

\bibitem{Reilly} R.C. Reilly, Variational properties of functions of the
mean curvatures for hypersurfaces in space forms. J. Differential Geometry 8
(1973) 465-477.

\bibitem{Reilly2} R.C. Reilly, Variational properties of mean curvatures,
Proc. Summer Sem. Canad. Math. Congress, 1971, 102-114.

\bibitem{Rund} H. Rund, Invariant theory of variational problems on
subspaces of a Riemannian manifold. Hambourger Math. Einsenchriften no.5,
1971.

\bibitem{Terng} C.L. Terng, Submanifolds with flat normal bundle, Math. Ann.
277 (1987), no.1, 95-111.
\end{thebibliography}
\end{document}